\title{The K-theory of boundary C*-algebras of symmetric spaces}
\date{\vspace{-5ex}}
\newcommand{\Q}{\mathbb{Q}}
\newcommand{\R}{\mathbb{R}}
\newcommand{\C}{\mathbb{C}}
\newcommand{\N}{\mathbb{N}}
\newcommand{\Z}{\mathbb{Z}}
\renewcommand{\H}{\mathbb{H}}
\newtheorem{theorem}{Theorem}[section]
\newtheorem{proposition}[theorem]{Proposition}
\newtheorem{lemma}[theorem]{Lemma}
\newtheorem{corollary}[theorem]{Corollary}
\newtheorem{definition}[theorem]{Definition}
\newtheorem{example}[theorem]{Example}
\begin{document}
\author{Torstein Ulsnaes}

\maketitle

We compute the K-theory of a collection of C*-algebras, which we refer to as boundary C*-algebras, arising as the crossed product C*-algebras of lattice actions on the Furstenberg boundaries of symmetric spaces of noncompact type. Along the way we show that some of these boundary C*-algebras are isomorphic but not spatially isomorphic. Several other examples of isomorphic crossed product C*-algebras constructed from very different dynamical systems can be found in \cite{phillips_examples_2007}, \cite{geffen_note_2024}, \cite{robertson_steger}.

I'm grateful to Bram Mesland and Yuezhao Li for pointing me to the work of Geffen and Kranz \cite{geffen_note_2024} which also computes the K-theory of boundary C*-algebras for lattices in $\H^n_\R$ where $n =  3$. The methods in the present paper were developed independently of theirs as part of my Ph.D. project and work in any dimension.

\section{Introduction}
\label{sec-introduction}

The boundary C*-algebras, which we will define shortly, have been studied by several authors (\cite{geffen_note_2024}, \cite{fuchs_cyclic_2007}, \cite{emerson_euler_2006}, \cite{mesland_hecke_2018}, \cite{robertson_steger}) and arise naturally when dealing with boundary extensions of symmetric spaces. Explicitly, if $X$ is a symmetric space of noncompact type and $\Gamma \subset \mathrm{Iso}(X)^0$ is a lattice, the boundary extension
\begin{equation}
    \label{eq-boundary-extension}
    0 \to C_0(X) \to C(\overline{X}) \to C(\partial X) \to 0
\end{equation}
given by the geodesic compactification $\overline{X}$ of $X$, yields an extension of crossed product C*-algebras

    $$0 \to C_0(X) \rtimes_r\Gamma \to C(\overline{X}) \rtimes_r\Gamma \to C(\partial X) \rtimes_r \Gamma \to 0.$$
In the rank $1$ case, the boundary $\partial X$ is a homogeneous space $$\partial X \simeq G/P$$
where $P \subset G = \mathrm{Iso}(X)_0$ is a minimal parabolic subgroup (Definition \ref{def-parabolic-subgroup}). For higher rank symmetric spaces the geodesic boundary is no longer a homogeneous space. With $P_i \subset G$ a parabolic subgroup, the spaces $G/P_i$, which are referred to as generalized flag manifolds, can still be thought of as "boundaries" in the abstract sense of Furstenberg \cite{furstenberg_poisson_1963} (see also \cite[Deﬁnition I.6.6]{a_borel_compactifications_2006}). In a more concrete sense, the maximal Furstenberg boundary appears as the quotient space in an equivariantly semi-split extension
$$0\to C_0(Y) \to C(\overline{X}^F) \to C(G/P_0)\to 0$$
mimicking the boundary extension \eqref{eq-boundary-extension} (see \cite{ulsnaes_boundary_2024}). The space $G/P_0$ is closely linked to the space of harmonic functions on $X$, as was shown in \cite{furstenberg_poisson_1963}. Here $\overline{X}^F$ is the so called maximal Furstenberg compactification of the symmetric space $X$ and  $Y$ is $\overline{X}^F$  removed a (unique) compact $G$-orbit $G/P_0 \subset \partial \overline{X}^F$ isomorphic to $G/P_0$  (see \cite[I.6]{a_borel_compactifications_2006}).

We will in the present work give a method to compute the K-theory of $C(G/P) \rtimes_r\Gamma$ from the K-theory of a certain vector bundle. In the case where the group $G$ has real rank 1 the base of the bundle is identified with the unit sphere bundle of the associated locally symmetric space $\Gamma \backslash X$. Assuming $\Gamma$ is cocompact and the cohomology groups $H^*(\Gamma \backslash X)$ are torsion free, the Chern map lets us determine the K-theory of these boundary C*-algebra from the cohomology of $\Gamma \backslash X$.

This paper is organized as follows. In Section \ref{sec-mostow-and-boundary-algebras} we will motivate the study of boundary C*-algebras from a geometric point of view, using the classical Mostow rigidity theorem as our guide. Section \ref{sec-boundary-algebras-and-tangent-bundles} contains the main results of this work, where the boundary C*-algebras are related to sphere bundles on the locally symmetric space $\Gamma \backslash X$. 

More precisely, we show (Proposition \ref{prop-boundary-kk-equivalence}) that under mild conditions the C*-algebras $C(G/P)\rtimes_r\Gamma$ share the K-theory of the total space of a certain vector bundle $\Gamma \backslash G\times_{K_i} V \to \Gamma \backslash G / K_i$. In the rank 1 case this can be strengthened to a KK-equivalence, and we can show that $\Gamma \backslash G / K_i$ can be identified with the unit tangent bundle of $\Gamma \backslash X$ (Lemma \ref{lem-sphere-bundle-iso}). 

In Section \ref{sec-torsion-free-cohomology} we put our theory to use and compute the K-theory of the boundary C*-algebras for cocompact lattices $\Gamma \subset G$ of symmetric spaces for which $\Gamma \backslash X$ is oriented and has no torsion in its cohomology. We do this by first computing the cohomology of the unit sphere bundle of $\Gamma \backslash X$ (Lemma \ref{lem-cohomology-unit-tangent-bundle}) and then following the torsion part of the cohomology through the Atiyah-Hirzebruch spectral sequence.

\section{Notation}

For a comprehensive introduction to the theory of symmetric spaces we refer the reader to \cite{helgason_differential_1979}. Compactifications of symmetric spaces are covered in \cite{a_borel_compactifications_2006} where the relevant sections are I.2 dealing with the geodesic compactification and I.1 which covers parabolic subgroups of Lie groups and their Langlands decomposition.

Throughout the present paper we will use the following notation, possibly adding more conditions when needed. We denote by $X$ a symmetric space of noncompact type, that is, a Riemannian symmetric space with non-positive sectional curvature and no euclidean factors in its de Rham decomposition. The dimension $n = dim(X)$ of $X$ will be denoted by $n$.  \textit{All symmetric spaces will be assumed to be of noncompact type}, which are precisely those symmetric spaces whose sectional curvature is non-positive and have no euclidean factors in their de Rham decomposition. We denote by $G = \mathrm{Iso}(X)^0$ the connected component of the isometry group of $X$ which is known to be a semisimple Lie group with trivial center. By $K$ we denote a maximal compact subgroup of $G$ and $\Gamma \subset G$  denotes a torsion-free lattice in $G$, which is  a torsion-free discrete subgroup such that $\Gamma \backslash G$ has finite volume with respect to the restricted Haar measure of $G$ to any fundamental domain of $\Gamma$ in $G$.

A minimal parabolic subgroup of $G$ will be denoted by $P_0$ or simply by $P$ if the rank of $G$ is $1$, in which case all parabolic subgroups are minimal. 

We recall that all maximal compact subgroups of $G$ arise as the stabilizer subgroups of some point $x_0\in X$ and, given a suitable $G$-invariant metric on the homogeneous space $G/K$, where $K = \mathrm{Stab}_G x_0$, we get an isometry of Riemannian spaces \cite[Theorem IV.3.3 (i)]{helgason_differential_1979}

    $$X \simeq G/K \qquad x = gx_0 \mapsto gK.$$
We denote by 
$$G = KAN$$
the Iwasawa decomposition of $G$, where $A$ is a connected abelian and $N$ is a nilpotent group. For a minimal parabolic subgroup $P_0 \subset G$ we write
$$P_0 = MAN = N\rtimes MA$$
for its Langlands decomposition where $M = P_0\cap K$ is a maximal compact subgroup of $P_0$. Recall that the dimension $\mathrm{dim}(A)$ is called the (real) \textit{rank} of $X$, which is also the largest dimension of any totally geodesic flat submanifold of $X$.

\section{Mostow rigidity and boundary C*-algebras}
\label{sec-mostow-and-boundary-algebras}

Let $x\in X$ and $v\in T_xX$ be any vector in the tangent space at $x$. We denote by  $\gamma_{v}^x: \R \to X$ be the unique geodesic satisfying
\begin{align*}
    \gamma_v^x(0) &= x \\
    (\gamma_v^x)'(0) &= v.
\end{align*}
Let 

    $$S  = \{\gamma_v^{x} ~|~ v\in S^{n-1} \subset T_{x}X \}.$$
We define an equivalence relation on $S$ by saying
    
    $$\gamma_v^x \sim \gamma_w^y \quad \Leftrightarrow \quad  \sup_{t\in R^+} d(\gamma_v^x(t), \gamma_w^y(t)) < \infty.$$
The space $S$ carries a $G$-action given by 
$$g\cdot \gamma_v^x := \gamma_{g_*v}^{gx}.$$
which is easily seen to preserve the above equivalence relation. We then define the geodesic boundary of $X$ to be
$$\partial X = S/\sim.$$

For a fixed $x_0\in X$ and an arbitrary $\gamma_v^x$ with $v\in S^{n-1} \subset T_xX$ there is a unique geodesic $\gamma_{x_0}^{v_0}$ such that $\gamma_x^v \sim \gamma_{x_0}^{v_0}$. We thus get an identification of $\partial X$ with the subset $S^{n-1} \subset T_{x_0}X$ and endow $\partial X$ with the topology inherited from identification. With respect to this topology the action of $G$ on $\partial X$ is continuous. For the proof of these claims we refer the reader \cite[Section I.2]{a_borel_compactifications_2006}.

In the context of Lie groups, parabolic subgroups are defined as follows:
\begin{definition}[Parabolic subgroup]
\label{def-parabolic-subgroup}
A parabolic subgroup of $G$ is a stabilizer in $G$ of a point $v\in \partial X$. A parabolic subgroup $P_0 \subset G$ is called minimal if it contains no (non-trivial) parabolic subgroups.
\end{definition}

We mention without proof that just as for Borel subgroups, any parabolic subgroup of $G$ is conjugate to a unique parabolic subgroup containing a fixed minimal parabolic subgroup (see \cite[p.32]{a_borel_compactifications_2006}).

\begin{definition}[Geodesic and Furstenberg boundary]
    The $G$-space $\partial X$ is called the geodesic boundary of $X$. The spaces $G/P$ for $P$ a minimal parabolic subgroup of $G$ are called the Furstenberg boundaries of $X$ (or $G$). If $P = P_0$ is minimal, we call $G/P_0$ a maximal Furstenberg boundary.
\end{definition}

Since all minimal parabolic subgroups of $G$ are conjugate to one another, all maximal Furstenberg boundaries of $G$ are ($G$-equivariantly) isomorphic. Similarly all parabolic subgroups  $P \subset G$ are conjugate to a unique parabolic subgroup containing a given minimal parabolic subgroup $P_0$. 

For these reasons we will refer to $G/P_0$  as \textit{the} maximal Furstenberg boundary. Let us quickly state the main property of the geodesic and Furstenberg boundary which we will need:

\begin{proposition}[{\cite[Prop. I.2.25]{a_borel_compactifications_2006}}]
    \label{prop-properties-geodesic-comp}
    Let $v\in \partial X$ and let $P_i = \mathrm{Stab}_G v$ be the associated parabolic subgroup. Then the natural $G$-equivariant map
        $$G/P_i \to \partial X\qquad gP_i \mapsto g v$$
    determines an imbedding of the  Furstenberg boundary $G/P_i$ into the geodesic boundary $\partial X$ which is surjective if and only if $X$ has rank 1.
\end{proposition}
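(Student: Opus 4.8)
The plan is to treat $\iota\colon G/P_0\to\partial X$, $gP_0\mapsto g v_0$, as the orbit map of $v_0$ and to read off both assertions from properties of that orbit. First I would record the trivial facts: since $P_0=\mathrm{Stab}_G v_0$ we have $gP_0=g'P_0 \iff g^{-1}g'\in P_0 \iff g v_0=g' v_0$, so $\iota$ is a well-defined, injective, $G$-equivariant continuous map whose image is exactly the orbit $G\cdot v_0\subseteq\partial X$. Thus the statement splits into two claims: (a) $\iota$ is a topological imbedding, i.e.\ a homeomorphism onto $G\cdot v_0$; and (b) $G\cdot v_0=\partial X$ if and only if $\mathrm{rank}(X)=1$.

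For (a) the key reduction is that the orbit is already a $K$-orbit. Since $G=KAN$ (Iwasawa) and $P_0=MAN\supseteq AN$, we get $G=KP_0$, hence $G\cdot v_0=K(P_0\cdot v_0)=K\cdot v_0$, and the inclusion $K\hookrightarrow G$ induces a diffeomorphism $K/M\xrightarrow{\ \sim\ }G/P_0$ with $M=K\cap P_0=\mathrm{Stab}_K v_0$. Because $K=\mathrm{Stab}_G x_0$ acts on $\partial X=S^{n-1}\subseteq T_{x_0}X\cong\p$ through the orthogonal isotropy representation $\mathrm{Ad}|_K$, the map $K/M\to S^{n-1}$, $kM\mapsto k\cdot v_0$, is a continuous injection of the compact space $K/M$ into the Hausdorff space $S^{n-1}$, hence a homeomorphism onto $K\cdot v_0$. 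Composing with $K/M\cong G/P_0$ shows $\iota$ is an imbedding with image $K\cdot v_0$.

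By the previous paragraph, $\iota$ is onto exactly when $K$ acts transitively on the unit sphere of $\p$. If $\mathrm{rank}(X)=1$, fix a maximal abelian $\mathfrak a\subseteq\p$; it is $1$-dimensional, spanned by a unit vector $H_0$, and the restricted Weyl group $\{\pm1\}$ carries $H_0$ to $-H_0$. Since every vector of $\p$ lies in $\mathrm{Ad}(k)\mathfrak a$ for some $k\in K$ \cite{helgason_differential_1979} and $\mathrm{Ad}|_K$ is orthogonal, the orbit $K\cdot H_0$ is the whole unit sphere, so $\iota$ is surjective. Conversely, assume $r:=\mathrm{rank}(X)\geq2$. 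From the restricted root space decomposition one has $\dim\k=\dim\mathfrak m+\dim\mathfrak n$ and $\dim\p=\dim\mathfrak a+\dim\mathfrak n=r+\dim\mathfrak n$, so
$$\dim(K\cdot v_0)=\dim(K/M)=\dim\k-\dim\mathfrak m=\dim\mathfrak n=\dim\p-r=(n-1)-(r-1)<n-1.$$
Hence $K\cdot v_0$ is a compact submanifold of the connected sphere $S^{n-1}$ of strictly smaller dimension, so it is nowhere dense and $\iota$ cannot be surjective.

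The bookkeeping in the first two paragraphs is routine; the substantive ingredients are the structure-theoretic inputs for rank one ($\dim\mathfrak a=1$, Weyl group $\{\pm1\}$, $\p=\bigcup_{k\in K}\mathrm{Ad}(k)\mathfrak a$) together with the dimension identities $\dim\k=\dim\mathfrak m+\dim\mathfrak n$ and $\dim\p=r+\dim\mathfrak n$ coming from the Iwasawa and Langlands decompositions. I expect the only genuine obstacle is quoting these cleanly from \cite{helgason_differential_1979}; alternatively, the whole proposition can simply be cited from \cite{a_borel_compactifications_2006}.
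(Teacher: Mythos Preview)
Your proof is correct, but note that the paper does not actually prove this proposition: it is stated with a citation to \cite[Prop.~I.2.25]{a_borel_compactifications_2006} and no argument is given. So there is nothing to compare your approach against; you have supplied a self-contained proof where the paper simply quotes the result.

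For the record, the argument you give is the standard one and all the ingredients check out. The reduction $G/P_0\cong K/M$ via $G=KP_0$ is immediate from Iwasawa, and compactness of $K/M$ makes the imbedding claim automatic. In the rank~$1$ direction the key facts are exactly as you list them: $\p=\bigcup_{k\in K}\mathrm{Ad}(k)\mathfrak a$ and the nontrivial Weyl element sending $H_0\mapsto -H_0$ so that $K\cdot H_0$ is all of $S^{n-1}$. For rank $\geq 2$ your dimension count is correct: from the Iwasawa decomposition $\dim\p=\dim\mathfrak a+\dim\mathfrak n$, while $\theta(\g_\alpha)=\g_{-\alpha}$ gives $\k=\mathfrak m\oplus\{X+\theta X:X\in\mathfrak n\}$, hence $\dim\k-\dim\mathfrak m=\dim\mathfrak n=n-r<n-1$. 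One minor point worth making explicit: the hypothesis that $\mathrm{Stab}_G v_0$ is a \emph{minimal} parabolic forces $v_0$ to be a regular element of (a conjugate of) $\mathfrak a$, which in rank~$1$ is automatic but is what justifies taking $v_0=H_0$ in your argument.
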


In summary, any Furstenberg boundary $G/P_i$ can always be identified with a closed $G$-orbit in the geodesic boundary $\partial X$ which agrees with $\partial X$ if and only if $\mathrm{rk}(X) = 1$. We mention that even though the imbedding $G/P_i \to \partial X$ is not unique, there is a canonical choice given by the $G$-orbit of the "barycenter" of a positive Weyl chamber.

Let us now introduce our main objects of study:

\begin{definition}[Boundary C*-algebras]
    Let $\Gamma \subset G$ be a lattice and $P_i \subset G$ a parabolic subgroup. We call the crossed products C*-algebras
    $$C(G/P)\rtimes_r\Gamma$$
    boundary C*-algebras associated with the lattice $\Gamma$. We refer to $C(G/P_0)\rtimes_r\Gamma$ as the maximal boundary C*-algebra.
\end{definition}

To get an idea why these boundary C*-algebras are of interest from a geometric point of view, we now make a short digression and recall the classical Mostow rigidity theorem (\cite[Theorem 24.1]{mostow_strong_1973}). The theorem states that a large family of locally symmetric spaces are determined (essentially) up to isometry by their fundamental groups, giving an affirmative answer to the Borel conjecture for these spaces. We will use the following formulation of the theorem, which can be deduced from \cite[Theorem 15.1.1]{morris_introduction_2015} (see also exercise 8 on page 308 of the same reference).

\begin{theorem}[Mostow]
    \label{th-mostow}
    Let $X$ be a symmetric space of noncompact type of rank 1 with $\mathrm{dim}(X) \geq 3$, and let $\Gamma, \Gamma' \subset \mathrm{Iso}(X)^0$ be two (torsion-free) lattices. Then $\Gamma \backslash X$ and $\Gamma' \backslash X$ are isometric up to scaling of the metric on each de Rham factor if and only $\Gamma = \Gamma'$ as abstract groups.
\end{theorem}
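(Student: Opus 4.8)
The statement is essentially a reformulation of Mostow--Prasad rigidity, so the plan is to deduce it from the abstract-isomorphism version recorded in \cite[Theorem 15.1.1]{morris_introduction_2015} and to translate between that and the ``isometry up to scaling'' formulation above. I would begin with two structural remarks. First, a symmetric space of noncompact type and rank $1$ is automatically irreducible, since a nontrivial de Rham splitting would contribute at least $1$ to the rank from each factor; hence ``scaling of the metric on each de Rham factor'' here just means replacing $g$ by $\lambda g$ for a single $\lambda>0$, and $G=\mathrm{Iso}(X)^0$ is a simple Lie group with trivial center. Second, the hypothesis $\mathrm{dim}(X)\ge 3$ is present precisely to exclude $X=\H^2_\R$, the unique rank $1$ symmetric space of noncompact type of dimension $<3$, for which $G\cong PSL_2(\R)$ and rigidity genuinely fails; this exclusion, together with the passage from cocompact lattices to arbitrary finite-covolume lattices (Prasad), is already built into the cited form of the theorem.

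For the direction ``isometric up to scaling $\Rightarrow$ abstractly isomorphic'', I would take a homothety $f\colon\Gamma\backslash X\to\Gamma'\backslash X$ (of some ratio $\lambda>0$) and lift it through the universal covers. Symmetric spaces of noncompact type are contractible, and $\Gamma,\Gamma'$ act freely and properly discontinuously on $X$ since they are torsion-free discrete, so $X$ is the common universal cover and $f$ lifts to a homothety $\tilde f\colon X\to X$ intertwining the deck actions; then $\gamma\mapsto\tilde f\gamma\tilde f^{-1}$ is an isomorphism of $\Gamma$ onto $\Gamma'$. (One may add that a non-flat homogeneous space admits no homothety of ratio $\lambda\ne 1$, since its sectional curvature cannot equal $\lambda^{-2}$ times itself, so $\tilde f$ is in fact an isometry; but it is the group isomorphism that the statement records.)

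For the converse, given an abstract isomorphism $\phi\colon\Gamma\to\Gamma'$, I would invoke \cite[Theorem 15.1.1]{morris_introduction_2015} --- whose hypotheses are met, as $G$ is simple of rank $1$, not locally isomorphic to $SL_2(\R)$, and $\Gamma,\Gamma'$ are lattices in $G$ --- to extend $\phi$ to a continuous automorphism $\Phi\colon G\to G$ with $\Phi|_\Gamma=\phi$ and $\Phi(\Gamma)=\Gamma'$. Then $\Phi$ carries $K=\mathrm{Stab}_G(x_0)$ onto some maximal compact subgroup; composing $\Phi$ with the inner automorphism conjugating this image back to $K$ alters $\phi$ only by conjugation by an element of $G$, i.e.\ by an isometry of $X$, so we may assume $\Phi(K)=K$. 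This yields a $\phi$-equivariant diffeomorphism $\bar\Phi\colon X=G/K\to G/K=X$, $gK\mapsto\Phi(g)K$, with $\bar\Phi(\gamma x)=\phi(\gamma)\,\bar\Phi(x)$.

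It then remains to see that $\bar\Phi$ is a homothety, and this is where I expect the only real work to lie. The $G$-invariant Riemannian metrics on the irreducible space $X$ form a one-parameter family: each corresponds to an $\mathrm{Ad}(K)$-invariant inner product on $\p=T_{x_0}X$, and irreducibility of the isotropy representation forces any such to be a positive multiple of the Killing form restricted to $\p$. Since $\Phi$ is a Lie algebra automorphism it preserves the Killing form, so $\bar\Phi^{*}g=\lambda^2 g$ for some $\lambda>0$; being $\phi$-equivariant, $\bar\Phi$ descends to a homothety $\Gamma\backslash X\to\Gamma'\backslash X$, which is exactly an isometry after rescaling the metric on the single de Rham factor by $\lambda$. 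The main obstacle is thus not any individual computation but this reduction: extracting the geometric conclusion from Morris's purely algebraic statement rests on the structure theory above (simplicity and trivial center of $G$, uniqueness up to scale of the invariant metric, the action of $\mathrm{Aut}(G)$ on $X$ through homotheties), and on checking that the dimension hypothesis and the finite-covolume notion of ``lattice'' are exactly those under which the cited theorem applies.
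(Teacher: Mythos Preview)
The paper does not give its own proof of this theorem: it is quoted as Mostow's classical result and simply attributed to \cite[Theorem 15.1.1]{morris_introduction_2015} (together with exercise~8 on p.~308 there). Your proposal is exactly the kind of deduction the paper points to---you invoke the same reference and correctly fill in the passage from the abstract-automorphism statement to the geometric ``isometry up to scaling'' formulation---so the approaches coincide; in fact your argument even shows $\lambda=1$, since a Lie algebra automorphism preserves the Killing form on the nose.
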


The reader may be more familiar with the case where $X = \H^n_\R$ with $n\geq 3$ which has only a single de Rham factors, and so the theorem simplifies accordingly. 

An elegant proof Theorem \ref{th-mostow}, very much in the spirit of the original proof of Mostow, uses the construction of an isometry of symmetric spaces from a homeomorphism of their geodesic boundaries by means of the so called \textit{barycenter extension method}. We refer to interested reader to \cite[Section 2]{matic_recent_2003} for an excellent overview of this method together with several applications to ergodic theory. For the purpose of motivating the study of boundary C*-algebras, we only need to know the following:

If we are given an isomorphism $\phi: \Gamma \to \Gamma'$ of irreducible lattices in a rank 1 symmetric space $X$ of noncompact type, and a homeomorphism $f: \partial X \to \partial X$ which is $\Gamma-\Gamma'$-equivariant in the sense that

$$f(\gamma x) = \phi(\gamma) f(x) \qquad \text{for all } x\in X \text{ and all } \gamma \in \Gamma,$$
then the barycenter extension method produces an isometry $\hat{f}: X\to X$ which will also be $\Gamma-\Gamma'$-equivariant and so descends to an isometry of the quotients 

    $$\hat{f}: \Gamma \backslash X \to \Gamma' \backslash X.$$
One could thus think of the boundary dynamical system $(\partial X, \Gamma)$ as the classifying datum for the locally symmetric spaces $X/\Gamma$, rather than the fundamental groups $\Gamma$ alone. 

Though this seems to only add more data to our classification, it does allow us to pose the following question: Do the corresponding reduced crossed product C*-algebras
\begin{equation}
    \label{eq-boundary-dyn-sys}
    C(\partial X)\rtimes_r\Gamma = C^*_r(\partial X \rtimes \Gamma)
\end{equation}
entirely classify the locally symmetric spaces $\Gamma \backslash X$?

This seems quite optimistic (and answer is indeed no), since in general much information about the underlying dynamical system is lost when passing to its associated crossed product C*-algebra  \cite{phillips_examples_2007}. However one would still expect that the boundary C*-algebras retain some geometric information of the underlying locally symmetric space $\Gamma \backslash X$, at least under sufficiently stringent conditions on $X$ or $\Gamma$. 

In this paper we will give a geometric interpretation of the boundary C*-algebras and produce a method to compute their K-theory. As a corollary we show that in many cases two boundary C*-algebras can be isomorphic though their underlying locally symmetric spaces are non-isometric, adding to the family of examples computed in \cite{geffen_note_2024}.

\section{Boundary C*-algebras and the unit tangent bundle of $\Gamma \backslash X$}
    \label{sec-boundary-algebras-and-tangent-bundles}
    The boundary C*-algebras of equation \eqref{eq-boundary-dyn-sys} are closely related to the unit tangent bundles of the locally symmetric spaces $\Gamma \backslash X$ (see Proposition \ref{prop-main-prop}). The analogy has been noted by several authors (for instance \cite{fuchs_cyclic_2007} and  \cite{emerson_euler_2006}), but when this correspondence is made explicit, an assumption of Spin${}^c$-structure is added on the locally symmetric space $\Gamma \backslash X$. We remove this assumption at the expense of having to work with twisted K-theory.
    
    Let us start with a lemma:

    \begin{lemma}
        \label{lem-sphere-bundle-iso}
        Assume $\mathrm{rk}(X) = 1$ and let $\Gamma \subset G$ be a torsion free lattice. Let $T^1\Gamma \backslash X$ be the unit tangent bundle of $\Gamma \backslash X$. Then there is an isomorphism of sphere bundles:
        \begin{center}
            \begin{tikzcd}
                K/M \arrow[r] \arrow[d] & \Gamma \backslash G /M   \arrow[r] \arrow[d]& \Gamma \backslash G /K \arrow[d] \\
                S^{n-1} \arrow[r]  & T^1 \Gamma \backslash  X   \arrow[r] &  \Gamma \backslash  X.
            \end{tikzcd}
        \end{center}
    \end{lemma}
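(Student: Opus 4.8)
The plan is to construct the vertical isomorphisms in the diagram explicitly and check compatibility with the bundle projections. The starting point is the identification $X \simeq G/K$ from the excerpt, which gives the rightmost vertical arrow $\Gamma\backslash G/K \xrightarrow{\sim} \Gamma\backslash X$. For the middle arrow, I would first produce a $G$-equivariant diffeomorphism $G/M \xrightarrow{\sim} T^1 X$ onto the unit tangent bundle of $X = G/K$, and then quotient by $\Gamma$ on the left; since $\Gamma$ is torsion-free and acts freely and properly on $X$ (hence on $T^1X$), passing to the quotient is unproblematic and the resulting map $\Gamma\backslash G/M \to T^1(\Gamma\backslash X)$ is again a diffeomorphism, commuting with the projection down to $\Gamma\backslash G/K \simeq \Gamma\backslash X$ by construction. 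Finally the fiber map $K/M \to S^{n-1}$ is the restriction of $G/M \to T^1X$ over the basepoint $eK$: the fiber of $T^1X \to X$ over $x_0$ is the unit sphere $S^{n-1}\subset T_{x_0}X$, and $K = \mathrm{Stab}_G(x_0)$ acts on it through the isotropy representation on $\p \simeq T_{x_0}X$, with $M = P_0\cap K$ being precisely the stabilizer in $K$ of a unit vector $v_0$ (this is essentially the content of Proposition \ref{prop-properties-geodesic-comp} together with the rank $1$ hypothesis, which guarantees $K$ acts transitively on $S^{n-1}$, i.e.\ $G/P_0 \simeq \partial X \simeq S^{n-1}$).

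The key computation is therefore the $G$-equivariant identification $G/M \simeq T^1 X$. I would argue as follows: $T^1 X$ is the homogeneous space $G/\mathrm{Stab}_G(\xi_0)$ where $\xi_0$ is the unit vector $v_0\in S^{n-1}\subset T_{x_0}X$ under the $G$-action on the tangent bundle defined in the excerpt (via $T_{x_0}X \simeq \p$ and the adjoint action). So it suffices to identify $\mathrm{Stab}_G(v_0)$ with $M$. An element $g$ fixes $v_0\in T_{x_0}X$ iff it fixes $x_0$ (so $g\in K$) and fixes $v_0$ under the isotropy action (so $g\in \mathrm{Stab}_K(v_0)$); it remains to see $\mathrm{Stab}_K(v_0) = M$. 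Now $\mathrm{Stab}_G(v_0) \subset \mathrm{Stab}_G(\gamma_{v_0}(+\infty)) = P_0$ (the parabolic at the endpoint of the geodesic), so $\mathrm{Stab}_K(v_0) \subset P_0\cap K = M$; conversely $M = MA N \cap K$ centralizes $A$ and one checks it fixes the geodesic $\gamma_{v_0}$ pointwise, hence fixes $v_0 = \gamma_{v_0}'(0)$. This gives $\mathrm{Stab}_K(v_0) = M$ and hence $G/M \simeq T^1X$, $K/M \simeq S^{n-1}$.

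The remaining steps are routine: check that all maps are smooth with smooth inverses (each is a map of homogeneous spaces / fiber bundles induced by an inclusion of closed subgroups, so this is automatic), and check that the two squares commute — the right square because $G/M \to G/K$ descends the tangent bundle projection, and the left square because $K/M\hookrightarrow \Gamma\backslash G/M$ and $S^{n-1}\hookrightarrow T^1(\Gamma\backslash X)$ are the fiber inclusions over the image of the basepoint. One should also note that, away from a lower-dimensional locus (points of $\Gamma\backslash X$ with nontrivial isotropy — but there are none since $\Gamma$ is torsion-free), the quotient $T^1(\Gamma\backslash X)$ is genuinely $\Gamma\backslash T^1 X$, which is why the left quotient by $\Gamma$ produces the unit tangent bundle of the quotient space rather than some orbifold object.

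The main obstacle I anticipate is the identification $\mathrm{Stab}_K(v_0) = M$, i.e.\ making precise the interplay between the geodesic boundary point fixed by $v_0$, the minimal parabolic $P_0$ stabilizing that boundary point, and the Langlands decomposition $P_0 = MAN$ — in particular showing $M$ fixes not just the endpoint $\gamma_{v_0}(+\infty)$ but the unit vector $v_0$ itself. This is where the rank $1$ hypothesis is genuinely used (to get transitivity of $K$ on $S^{n-1}$, equivalently $G/P_0 = \partial X$) and where one leans on the standard structure theory of rank one symmetric spaces; everything else is formal nonsense about homogeneous spaces and taking free, proper quotients.
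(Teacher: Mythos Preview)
Your proposal is correct and follows essentially the same route as the paper: identify $T^1X$ with $G/\mathrm{Stab}_G(x_0,v_0)$ via the orbit map, use rank~$1$ to get transitivity of $K$ on $S^{n-1}$, compute the stabilizer as $K\cap P = M$, and then pass to the $\Gamma$-quotient. The only difference is cosmetic: where you argue both inclusions $\mathrm{Stab}_K(v_0)\subset M$ and $M\subset \mathrm{Stab}_K(v_0)$ by hand (via the endpoint map and the fact that $M$ centralizes $A$), the paper simply observes that for $g\in K$ the isotropy action on $T_{x_0}X$ agrees with the $G$-action on $\partial X$ under the identification $S^{n-1}\simeq \partial X$, so $\mathrm{Stab}_G(x_0,v_0)=K\cap P=M$ directly from the Langlands definition $M=P\cap K$.
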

    \begin{proof}
        By Proposition \ref{prop-properties-geodesic-comp} we have a $G$-equivariant homeomorphism
            $$\partial X = G/P$$
        where $P\subset G$ is the minimal parabolic subgroup of elements in $G$ fixing a point $v_0 \in \partial X$. Let $x_0\in X$ be any point, $K = \mathrm{Stab}_Gx_0$ and identify $\partial X$ with $S^{n-1} \subset T_{x_0}X$. We denote by $v_0$ also the image of $v_0$ in $T_{x_0}X$ under this identification.

        When $X$ has rank 1, the continuous action of $G$ on $T^1X$ is transitive (since in this case $K = \mathrm{Stab}_Gx_0$ acts transitively $S^{n-1} \subset T_{x_0}X$). Thus there is  a $G$-equivariant homeomorphism 
        $$ T^1X \simeq G/\mathrm{Stab}_G(x_0, v_0)$$
        where
            $$\mathrm{Stab}_G(x_0, v_0) = \mathrm{Stab}_G(x_0) \cap \mathrm{Stab}_G(v_0) = K\cap P = M.$$
        We readily get
        $$G/M \simeq T^1X \qquad$$
        as (left, non-principal) $G$-bundles over $X = G/K$. It follows that for a closed subgroup $H\subset G$ we have
        $$T^1 (H\backslash X) = H\backslash T^1X = H\backslash G/M.$$
        Setting $H = \Gamma$ completes the proof.
    \end{proof}

    \textbf{Remark:} In case the space $X$ has higher rank, the action of $G$ on $T^1X$ is no longer transitive, thus Lemma \ref{lem-sphere-bundle-iso} does not hold. However if $M = P_0\cap K$ for a minimal parabolic subgroup $P_0 \subset G$, the $G$-bundle
    $$K/M \to G/M \to G/K = X$$
    is a subbundle of $T^1 X$ consisting of unit vectors pointing in the direction of maximal divergence of geodesic rays. Equivalently the subbundle which at each $x_0 \in X$  is spanned by the vectors  $v \in G/P_0\subset \partial X \simeq S^{n-1} \subset T_{x_0}X$ pointing towards a the canonical imbedded image of $G/P_0$ in $\partial X$.
    
    Recall that a Kirchberg algebra is a separable, nuclear, simple, purely infinite C*-algebra. The following proposition tells us when the boundary C*-algebras are Kirchberg algebras:
    
    \begin{proposition}[{\cite[Prop. 3.4]{delaroche_purely_1997}}]
        \label{prop-kirchberg}
        Let $P_0\subset G$ be a minimal parabolic subgroup. Then the C*-algebras $$C(G/P_0)\rtimes_r\Gamma$$
        are Kirchberg algebras in the UCT class.
    \end{proposition}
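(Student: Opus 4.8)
The claim packages five properties of $A = C(G/P_0)\rtimes_r\Gamma$: separability, nuclearity, simplicity, pure infiniteness, and membership in the UCT class. Separability is immediate, since $G/P_0$ is compact metrizable and $\Gamma$ is countable. The remaining properties I would deduce from three dynamical features of the $\Gamma$-action on the flag manifold $G/P_0$, each classical for lattices in semisimple Lie groups: topological amenability of the action; minimality together with topological freeness; and the fact that $G/P_0$ is a \emph{boundary} in Furstenberg's sense (strong proximality). The role of the cited Proposition~3.4 of Anantharaman-Delaroche is precisely to assemble these inputs, and the plan is to verify its hypotheses in our situation.

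\emph{Amenability, nuclearity, UCT.} First I would record that $\Gamma \curvearrowright G/P_0$ is topologically amenable. This is standard: the transformation groupoid $(G/P_0)\rtimes\Gamma$ is Morita equivalent — via the space $G$ carrying the commuting left $\Gamma$- and right $P_0$-actions — to the transformation groupoid $(\Gamma\backslash G)\rtimes P_0$, and the latter is amenable because $P_0 = MAN$ is an amenable locally compact group ($M$ compact, $AN$ solvable). Amenability of the (second countable, locally compact, Hausdorff) groupoid $(G/P_0)\rtimes\Gamma$ then yields three things at once: the full and reduced crossed products coincide; $A$ is nuclear; and, by Tu's theorem that the C*-algebra of an amenable groupoid lies in the bootstrap class, $A$ satisfies the UCT. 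None of this is affected by torsion in $\Gamma$.

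\emph{Simplicity.} Next I would invoke the criterion that, for an amenable action of a discrete group on a compact Hausdorff space, the reduced crossed product is simple once the action is minimal and topologically free (Archbold-Spielberg; applicable here since full and reduced crossed products coincide). Minimality of $\Gamma\curvearrowright G/P_0$ is the classical topological counterpart of Moore-type ergodicity: $\Gamma$ is Zariski dense in $G$ by the Borel density theorem, so every orbit closure has full Zariski closure, forcing it to be all of $G/P_0$. Topological freeness: $G$ acts faithfully on $G/P_0$ because $G$ is semisimple with trivial centre (the normal core of $P_0$ is trivial), and for $\gamma \neq e$ the fixed-point set $\{gP_0 : g^{-1}\gamma g \in P_0\}$ is a closed real-analytic subset of the connected real-analytic manifold $G/P_0$; were it to have nonempty interior, analyticity would force $\gamma$ to act trivially, contradicting faithfulness. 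This argument uses only nowhere-density of fixed-point sets, hence is insensitive to torsion.

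\emph{Pure infiniteness — the main obstacle.} This is the step that genuinely needs the hypotheses of Anantharaman-Delaroche's Proposition~3.4. The relevant condition is that $\Gamma\curvearrowright G/P_0$ be a \emph{strong boundary action} — equivalently an ``$n$-filling'' action in the sense of Laca-Spielberg: for any finite family of nonempty open sets $U_1,\dots,U_k \subset G/P_0$ there exist $\gamma_1,\dots,\gamma_k \in \Gamma$ with $\bigcup_i \gamma_i U_i = G/P_0$. I would extract this from strong proximality of the maximal Furstenberg boundary: $\Gamma$-translates of any probability measure on $G/P_0$ accumulate on Dirac masses, which, combined with minimality, lets one dilate an arbitrarily small open set to cover any prescribed open set; iterating gives the filling property. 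Feeding this into Proposition~3.4 yields that $A$ is purely infinite and simple, completing the proof that $A$ is a Kirchberg algebra in the UCT class. The delicate point — and the reason this is substantive rather than formal — is checking the filling property uniformly, in particular that finite stabilisers (present when $\Gamma$ has torsion) do not obstruct it; this is exactly where strong proximality of the boundary enters in an essential way.
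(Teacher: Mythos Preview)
The paper provides no proof of this proposition; it simply cites Anantharaman-Delaroche and moves on. Your sketch is a correct reconstruction of the argument behind that citation, assembling the standard ingredients: amenability of the $\Gamma$-action on $G/P_0$ (inherited from amenability of $P_0=MAN$) yields nuclearity and, via Tu's theorem, the UCT; minimality together with topological freeness gives simplicity; and strong proximality of the maximal Furstenberg boundary supplies the filling property needed for pure infiniteness. If anything, your account is more scrupulous than the bare citation: Anantharaman-Delaroche's 1997 paper handles pure infiniteness and simplicity, while the UCT strictly requires Tu's later result on amenable groupoids, which you correctly invoke separately.
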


    \textbf{Remark} Minimality of $P_0$ is essential in Proposition \ref{prop-kirchberg}. 
    If $P$ is not minimal, then the action of $\Gamma$ on $G/P$ is not amenable (see \cite[Cor. 4.3.7]{zimmer}) hence by \cite[Prop. 1.4.4]{delaroche_purely_1997} the crossed product
    $$C(G/P)\rtimes_r\Gamma$$
    is not nuclear (it is only exact\footnote{Since the quotient map $G/P_0 \to G/P$ gives an inclusion $C(G/P)\rtimes_r\Gamma \subset C(G/P_0)\rtimes_r\Gamma$.})

    As noted in Proposition \ref{prop-properties-geodesic-comp} when $X$ has rank 1, there is a $G$-equivariant isomorphism $\partial X\simeq G/P$. Thus in this case (and only this) the crossed product 
        $$C(\partial X)\rtimes_r\Gamma = C(G/P) \rtimes_r\Gamma$$
    are all Kirchberg algebras in the UCT class. In higher ranks $C(\partial X) \rtimes_r\Gamma$ is not simple: The action of $G$ on $C(\partial X)$ has fundamental domain equal to any choice of closed "positive Weyl chamber at infinity" $\overline{A^+}(\infty) \subset \partial X$ (this could be deduced from the proof of Prop. I.2.16 in \cite{a_borel_compactifications_2006} where the author shows $\partial X = K \overline{A^+}(\infty )$ and the fact that $K$-orbits and $G$-orbits in $\partial X$ agree). This in turn implies that $C(\partial X)$ has non-trivial $\Gamma$-invariant ideals.

    We are now ready to state the following proposition:

    \begin{proposition}
        \label{prop-boundary-kk-equivalence}
        Let $P_i \subset G$ be a parabolic subgroup with Langlands decomposition $P_i = M_i A_i N_i$. Denote by $K_i =  P_i\cap K = M_i\cap K$ its maximal compact subgroup. Assume the Baum-Connes conjecture with coefficients in $C(\Gamma \backslash G)$ holds for $P_i$. Then  

        $$K_*(C(G/P_i)\rtimes_r\Gamma) = K_*( C_0(\Gamma \backslash G \times_{K_i} V))$$
        where $V = T_{e}X$ with the action of $K_i$ induced by conjugation and $\Gamma \backslash G \times_{K_i} V$ is the orbit space of $K_i$ under the diagonal action on $\Gamma \backslash G \times V$.
    \end{proposition}
    
    \begin{proof} 
        Using Green's symmetric imprimitivity theorem we get a Morita equivalence 

        \begin{align*}
            C(G/P_i) \rtimes_r\Gamma & \sim_{Mor} C_0(\Gamma \backslash G) \rtimes_rP_i 
        \end{align*}
        Now we follow the argument in Section 7 of \cite{chabert_echterhoff_nest} where the authors show that 

        \begin{align*}
            K_*(C(\Gamma \backslash G)\rtimes_r P_i) &= K_*(C(\Gamma \backslash G) \otimes C_0(V) \rtimes K_i)
        \end{align*}
      
        Finally, since $\Gamma$ is torsion free, the action of $K_i$ is free, so by Green's Moria equivalence we get
        $$C(\Gamma \backslash G ) \rtimes_r C_0(V) \rtimes_r K_i \sim_{Mor}C_0(\Gamma \backslash G \times_{K_i} V).$$
    \end{proof}

    As the UCT class is closed under crossed products by amenable actions, Proposition \ref{prop-boundary-kk-equivalence} gives a KK-equivalence 

    $$C(G/P_0) \rtimes_r \Gamma \sim_{KK} C(\Gamma \backslash G \times_{M} V).$$

    However, for a non-minimal parabolic subgroups such a KK-equivalence may not hold,  but it does hold up to a direct summand by a C*-algebra with trivial K-theory:

    \begin{lemma}
        With the notation of Proposition \ref{prop-boundary-kk-equivalence}, we have a KK-equivalence
        $$C(G/P_i) \rtimes_r \Gamma \simeq_{KK} C(\Gamma \backslash G \times_{K_i} V) \oplus \ker(\hat{\gamma})$$
        where $\hat{\gamma} \in KK(C(G/P_i) \rtimes\Gamma, C(G/P_i)\rtimes_r\Gamma)$ is Kasparovs $\hat{\gamma}$ element defined in \cite[Theorem 2, Sec. 6]{ranicki_k-theory_1995}.
    \end{lemma}

    \textbf{Remark: } Kasparov showed that the Baum-Connes map factors through the image of the "multiplication by $-\otimes \hat{\gamma}$" map on  $K_*(C(G/P)\rtimes_rP_i)$ hence our assumptions ensure this multiplication map must be an isomorphism. However, this is weaker than requiring that $\hat{\gamma} = 1 \in KK(C(\Gamma \backslash G) \rtimes_r P_i, C(\Gamma \backslash G) \rtimes_r P_i)$, but it does imply that $\ker(\hat{\gamma})$ have trivial K-theory. If we knew that $\ker(\hat{\gamma})$ (or  $C(\Gamma \backslash G)\rtimes_r P_i$) was in the UCT class, this would force $\ker(\hat{\gamma})$ to be KK-equivalent to $0$, but it is unclear to the author if this is the case. 
    
    \begin{proof}
        We will need the following facts: The category $KK$ is idempotent complete, meaning every idempotent $$\alpha \in KK(A, A)$$ admits a (categorical) kernel.\footnote{This can be shown explicitly realizing the map $a$ as a *-homomorphism on a C*-algebra KK-equivalent to $A$, but it also follows from the fact that KK is a triangulated category with countable direct sums, hence \cite[Prop. 1.6.8]{neeman} guarantees it is idempotent complete.}

        It is shown in \cite[Theorem 2(1), Sec. 6]{ranicki_k-theory_1995} that there are elements 

        $$\alpha \in KK( C(\Gamma \backslash G) \rtimes_r P_i, C(\Gamma \backslash G \times_{K_i} V))$$
        $$\beta \in KK(C(\Gamma \backslash G \times_{K_i} V), C(\Gamma \backslash G) \rtimes_r P_i)$$
        satisfying\footnote{we are tacitly identifying $\C_V$ with $C_0(V)$.}

            $$\alpha \otimes \beta = id \qquad \beta \otimes \alpha = \hat{\gamma}.$$
        Thus $\alpha$ is $KK$-equivalent to a retract of $ C(\Gamma \backslash G) \rtimes_r P_i$ to $C(\Gamma \backslash G \times_{K_i} V)$.
        It follows that
        
        $$C(\Gamma \backslash G) \rtimes_r P_i \simeq_{KK} C(\Gamma \backslash G \times_{K_i} V) \oplus \ker(\hat{\gamma}).$$
    \end{proof}

    By Proposition \ref{prop-boundary-kk-equivalence} we have reduced the problem of computing the K-theory of the boundary C*-algebras to that of computing the topological K-theory of the total spaces of a the vector bundles 
    $$V\to \Gamma \backslash G\times_{K_i} V\to \Gamma \backslash G / K_i.$$

    The next corollary covers the "nice" case where the action of $K_i$ is spinnor, meaning it factors through an action of $Spin_n^\C$ on $V$.

    \begin{corollary}
        \label{cor-boundary-kk-equivalence}
        Keeping the notation of Proposition \ref{prop-boundary-kk-equivalence}, assume the action of $K_i$ on $V$ factors through an action of $Spin^c_n$ on $V$. Then
        $$K_*(C(G/P_i) \rtimes_r \Gamma) = K_{*+n}(C_0(\Gamma \backslash G / K_i))$$
    \end{corollary}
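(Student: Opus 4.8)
The plan is to read off from Proposition~\ref{prop-boundary-kk-equivalence} that $C(G/P_0)\rtimes_r\Gamma$ is $KK$-equivalent to $C_0(E)$, where $E := \Gamma\backslash G\times_M V$ is the total space of a rank-$n$ real vector bundle over $B := \Gamma\backslash G/M$, and then to invoke the Thom isomorphism for this bundle. First I would record that $\Gamma\backslash G\to\Gamma\backslash G/M$ is a principal $M$-bundle (the $M$-action being free and proper since $M$ is compact and $\Gamma$ is torsion free, exactly as in the last step of the proof of Proposition~\ref{prop-boundary-kk-equivalence}), so that $E$ is the vector bundle associated to it via the linear $M$-action on $V\simeq\R^n$. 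The hypothesis that this action factors through $Spin^\C_n$ says precisely that the classifying cocycle of $E$ factors through $Spin^\C_n$, i.e. that $E$ is a $Spin^\C$ vector bundle of rank $n$; the content of the corollary is then the $C^*$-algebraic Thom isomorphism $C_0(E)\sim_{KK_{+n}}C_0(B)$.

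Concretely, I would carry this out by re-running the last steps of the proof of Proposition~\ref{prop-boundary-kk-equivalence} with the extra hypothesis in hand. There one reaches
$$C(G/P_0)\rtimes_r\Gamma \sim_{KK}\bigl(C_0(\Gamma\backslash G)\hat\otimes\C_V\bigr)\rtimes M,$$
and the only place the internal structure of $V$ was used was in replacing the $M$-equivariant graded Clifford algebra $\C_V\cong\C_n$ by $C_0(V)$. Instead I would use that the complex Clifford algebra $\C_n$, equipped with the conjugation action of $Spin^\C_n$, is equivariantly $KK$-equivalent to $\C$ (with the trivial action) up to the degree shift $n$ — this is the equivariant Atiyah--Bott--Shapiro/Bott element, implemented by the graded spinor bimodule on which $Spin^\C_n$ acts — and then restrict this $KK^{Spin^\C_n}_{+n}$-equivalence along the homomorphism $M\to Spin^\C_n$ furnished by the hypothesis, obtaining $\C_V\sim_{KK^M_{+n}}\C$. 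Tensoring with $C_0(\Gamma\backslash G)$ and applying the descent functor for the compact group $M$, which carries equivariant $KK$-equivalences to $KK$-equivalences and respects the degree shift, gives
$$\bigl(C_0(\Gamma\backslash G)\hat\otimes\C_V\bigr)\rtimes M \sim_{KK_{+n}} C_0(\Gamma\backslash G)\rtimes M,$$
and the Morita equivalence $C_0(\Gamma\backslash G)\rtimes M\sim_{Mor}C_0(\Gamma\backslash G/M)$ already used in Proposition~\ref{prop-boundary-kk-equivalence} then closes the chain.

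The step I expect to be the main obstacle is making the equivariant Clifford triviality genuinely precise: one must verify that the conjugation action of $Spin^\C_n$ on $\C_n$, together with the spinor bimodule, defines an invertible class in $KK^{Spin^\C_n}_{+n}(\C_n,\C)$, with careful bookkeeping of the $\Z/2$-grading (it is the gradedness of the spinor module that produces the shift by $n$) and of the parity of $n$. This is, however, the standard input into the $KK$-theoretic $Spin^\C$-Thom isomorphism \cite{ranicki_k-theory_1995, blackadar_k-theory_1986}; everything else — the reduction of structure group, the behaviour of crossed products by the compact group $M$, and the final Morita equivalence — is formal and already appears, in untwisted form, in the proof of Proposition~\ref{prop-boundary-kk-equivalence}.
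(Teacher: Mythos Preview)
Your argument is correct and follows the same route as the paper: both restrict a $Spin^\C_n$-equivariant $KK$-equivalence along the hypothesized map $M\to Spin^\C_n$, then take crossed products by $M$ and finish with the Morita equivalence $C_0(\Gamma\backslash G)\rtimes M\sim C_0(\Gamma\backslash G/M)$. The only cosmetic difference is the stage at which the untwisting is applied: the paper starts from the conclusion of Proposition~\ref{prop-boundary-kk-equivalence} and uses Kasparov's equivalence $C_0(V)\sim_{KK^{Spin^\C_n}} C_0(\R^n)$ (with $\R^n$ carrying the trivial action), pulling $C_0(\R^n)$ out of the crossed product and reading off the shift by Bott periodicity, whereas you re-enter the proof of that proposition at the Clifford-algebra stage and use $\C_V\sim_{KK^{Spin^\C_n}_{+n}}\C$ directly---these are interchangeable via the $KK^M$-equivalence $\C_V\sim C_0(V)$ already invoked in Proposition~\ref{prop-boundary-kk-equivalence}.
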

    \begin{proof}
        By \cite[Lemma 1, p. 546]{kasparov_operator_1981} we know $$C_0(V) \sim_{KK^{Spin^c_n}} C_0( \R^n)$$ 
        where $\R^n$ has the trivial $Spin^c_n$-action. By assumption the action of $K_i$ factors through a map
            $$K_i\to Spin^c_n \to SO(V).$$
        Since the induced restriction map
        $$KK^{Spin^c_n} \to KK^{K_i}$$
        is functorial and preserves the class of the unit in $KK$-theory we readily get

        $$C_0(V) \sim_{KK^{K_i}} C_0(\R^n).$$
        Thus by the proof of Proposition \ref{prop-boundary-kk-equivalence}
        \begin{align*}
             K_i(C(G/P_i) \rtimes_r\Gamma)  & =  K_i(( C_0(\Gamma \backslash G) \otimes C_0(V)) \rtimes_r{K_i} )  \\ & = K_i(  C_0(\Gamma \backslash G) \rtimes_r{K_i} \otimes C_0(\R^n)) = K_{i+n}( C_0(\Gamma \backslash G / {K_i})).
        \end{align*}
        Where the last equality follows by Morita equivalence (the action of $K_i$ on $\Gamma \backslash G$ is free and proper as $\Gamma$ is torsion free) and Kasparov's Bott periodicity.
    \end{proof}

    \textbf{Remark:} It is well known that any representation which preserves a complex structure on $V$ will factor through $Spin_r^\C$. Thus Corollary \ref{cor-boundary-kk-equivalence} applies to all hermitian locally symmetric spaces like those covered by $\H^n_\C$. Similarly one can show that all other rank 1 symmetric  spaces except for the real hyperbolic spaces, the action of $M$ on $V$ factors through $Spin^c_n$. A good reference for rank 1 symmetric spaces is section 2.2 of \cite{quint_overview_2006}.
    
    In the case of $\H_\R^n$, we have $P_0\cap K = M = SO(n-1)$ and the action of $M$ on $V$ is equivalent to the action of $M$ on $\R^n$ given by
    $$m\cdot (x_1, x_2, \dots, x_n) := (x_1, m(x_2, \dots, x_m))$$
    where $m$ acts by the usual matrix action of $SO(n-1)$ on $\R^{n-1}$ on the right hand side (see \cite{quint_overview_2006}). This action does not factor through $\mathrm{Spin^c_n}$ when $n \geq 4$ so is not covered by Corollary \ref{cor-boundary-kk-equivalence}\footnote{To see this, note that $\pi_1(Spin^c(n))= \Z$ for $n\geq 3$. The inclusion $SO(n) \to SO(n+1)$ induces an isomorphism of fundamental groups for $n\geq 3$ with $\pi_1(SO(n)) = \Z_2$ it is clear that the identity $\pi_1(SO(n)) = \Z_2 \xrightarrow[]{id} \Z_2 = \pi_1(SO(n+1))$ cannot factor through a map $\pi_1(Spin^c(n)) = \Z \to \Z_2 = \pi_1(SO(n+1))$.}. 
   
    To tackle the non-Spin${}^c$-case, we first recall that an oriented bundle over a smooth manifold $Y$ admits a Spin${}^c$-structure if and only if $$W_3 = 0 \in H^3(Y, \Z)$$
    where $W_3$ is the third integral Stiefel-Whitney class of the bundle. In case $W_3 \neq 0$ the following twisted version of Thom-isomorphism theorem (see for instance \cite[Theorem 3.5]{carey_bai_ling_2008}) gives the K-theory for orientable boundary C*-algebras in terms of the K-theory of the associated locally symmetric space. Recall that the twisted K-group $K_*(X, w) = K_*(A_w)$ where $w\in H^3(X, \Z)$ and $A_w$ is the stable continuous trace C*-algebra with Dixmier-Douady invariant $w$ and spectrum $X$.

    \begin{proposition}
        \label{prop-main-prop}
        Let $P_i \subset G$ be a parabolic subgroup satisfying the conditions in Proposition \ref{prop-boundary-kk-equivalence}. Then 
        $$K_i(C(G/P_i)\rtimes_r\Gamma) =  K_{i+n}(C_0(\Gamma \backslash G /K_i), W_3)$$
        where the right hand side is the twisted K-theory, twisted by the Stiefel-Whitney class of the vector bundle 
        $$V\to \Gamma \backslash G \times_{K_i} V \to \Gamma \backslash G/ K_i.$$
    \end{proposition}
    \noindent 
    We remark that

        $$\Gamma \backslash G \times_{K_i} V= \pi^*(\Gamma\backslash G\times_K V)$$
    where $\pi$ is the fibration 
    \begin{equation}
        \label{eq-q-map}
        \pi: \Gamma \backslash G /K_i \to \Gamma \backslash G /K.
    \end{equation}
    Thus by naturality of the Stiefel-Whitney class, if $\Gamma \backslash G/K$ admits a $Spin^c$-structure, so do the bundles
    $$V \to \Gamma \backslash G\times_{K_i} V \to \Gamma \backslash G / K_i$$
    for all $i$. In particular 

    \begin{corollary}
        \label{cor-spinc-kk-equiv}
         Keeping the notation of Proposition \ref{prop-main-prop}, assume further that $\Gamma \backslash X$ admits a  $Spin^c$-structure, then 
         $$K_i(C(G/P_i)\rtimes_r\Gamma) =  K_{i+n}(C_0(\Gamma \backslash G /K_i)).$$
    \end{corollary}

    In general a $Spin^c$-structure on $\Gamma\backslash G /K$ may be a stronger  requirement than we need for the equivalence of Corollary \ref{cor-spinc-kk-equiv} to hold, as we only need $\pi^*(W_3) = 0 \in H^3(\Gamma \backslash G / K_i, \Z)$ where $\pi$ the quotient map of equation \eqref{eq-q-map}.
    Let us look a little closer at the case where $G$ as rank 1. In this case $\Gamma \backslash G/M$ is the sphere bundle of $\Gamma \backslash G / K$ and (assuming it is orientable) we get a long exact Gysin sequence

    $$\cdots \to H^{r-n -2}(\Gamma \backslash X) \to H^r(\Gamma \backslash X)\xrightarrow{\pi^*} H^r(\Gamma \backslash G/ M) \to H^{r-n-1}(\Gamma \backslash X) \to \cdots$$
    we can deduce that $\pi^*: H^3(\Gamma \backslash X) \to H^3(\Gamma \backslash G/M)$ is injective if $n\geq 2$.

    Noting that $T\Gamma \backslash X = \Gamma \backslash G \times_K V$, we see that with these assumptions in the rank 1 case, $\Gamma \backslash G \times_M V$ is $Spin^c$ if and only if $\Gamma \backslash G / K$ is $Spin^c$. Hence we have no loss of generality by assuming $\Gamma \backslash X$ is $Spin^c$.
 
\section{The case of torsion free cohomology}
\label{sec-torsion-free-cohomology}
With Proposition \ref{prop-main-prop} and Corollary \ref{cor-spinc-kk-equiv} at our disposal we are ready to compute the K-theory of some boundary C*-algebras in terms of the cohomology groups of the associated locally symmetric space under the assumption that the locally symmetric space $\Gamma \backslash X$ has no torsion in its integral cohomology and admits a $Spin^c$-structure. It is well known that if $dim(\Gamma \backslash X) \leq 4$ then $\Gamma \backslash X$ admits a $Spin^c$-structure if and only if it is orientable, but in higher dimensions this is no longer true. Throughout this section we will assume $\Gamma$ is a cocompact torsion free lattice in $G$.

We start by computing the cohomology of the unit tangent bundle of $\Gamma \backslash X$:

\begin{lemma}
    \label{lem-cohomology-unit-tangent-bundle}
    Assume $X$ has rank 1 and denote by  $\chi: = \chi(\Gamma\backslash X)$ its Euler characteristic. Then

    $$\begin{matrix*}[l]
        H^i(\Gamma \backslash G/ M) = H^i(\Gamma \backslash X)  & 0\leq i < n-1 \\
        H^i(\Gamma \backslash G/ M) = H^{i-n + 1}(\Gamma \backslash X) &  n+1\leq i \leq 2n - 1.
    \end{matrix*}$$
    while if $\chi = 0$ we have
    \begin{align*}
        H^{n-1}(\Gamma \backslash G/M) & = H^{n-1}(\Gamma\backslash X)\oplus \Z \\
        H^{n}(\Gamma \backslash G / M) & = H^1(\Gamma\backslash X)\oplus \Z
    \end{align*}
    If $\chi = 1$ we have \footnote{It is known that the case $|\chi| = 1$ does not occur, but we add it here for completeness. See the discussion after Proposition \ref{prop-k-theory-boundary-algebras}}
    \begin{align*}
        H^{n-1}(\Gamma \backslash G/M) & = H^{n-1}(\Gamma\backslash X) \\
        H^{n}(\Gamma \backslash G / M) & = H^1(\Gamma\backslash X).
    \end{align*}
    If $\chi \neq 0$ and $\chi \neq 1$ we have
    
    \begin{align*}
        H^{n-1}(\Gamma \backslash G/M) & = H^{n-1}(\Gamma\backslash X)\oplus \Z \\
        H^{n}(\Gamma \backslash G / M) & = H^1(\Gamma\backslash X)\oplus \Z_{|\chi|}.
    \end{align*}
\end{lemma}

\begin{proof}
    Since $\Gamma \backslash X$ is assumed to be compact and orientable (as $G$ is connected), so is its unit sphere bundle $\Gamma \backslash G/ M$ (see Lemma \ref{lem-sphere-bundle-iso}) in the sense of \cite[p. 437-438]{hatcher_algebraic_2005}. We thus have a Gysin sequence 
    $$\cdots \to H^{i-n}(\Gamma \backslash X) \to H^i(\Gamma \backslash X)\to H^i(\Gamma \backslash G/ M) \to H^{i-n+1}(\Gamma \backslash X) \to \cdots$$
    Since $H^{i}(\Gamma \backslash X) = 0$ if $i< 0$ we get that  
    $$H^i(\Gamma \backslash G/ M) = H^i(\Gamma \backslash X) \qquad (0\leq i < n-1)$$
    and in a similarly vein
    $$H^i(\Gamma \backslash G/ M) = H^{i-n + 1}(\Gamma \backslash X) \qquad (n+1\leq i \leq 2n - 1).$$
    For the values $i = n, n-1$ we have an exact sequence
    \begin{align}
        \label{eq-h-n-and-h-n-1}
        0 & \to H^{n-1}(\Gamma \backslash X) \to H^{n-1}(\Gamma \backslash G / M) \to \Z \xrightarrow[\chi]{} \Z  \\
        & \to H^n(\Gamma \backslash G / M)\to H^1(\Gamma \backslash X)\to 0 \nonumber
    \end{align}
    where $\chi: \Z\to  \Z$ denotes the multiplication by $\chi$ map. The case of $|\chi| = 1$ is then straightforward. Assuming $|\chi| \neq 1$ and $\chi \neq 0$. From equation \eqref{eq-h-n-and-h-n-1} we get two short exact sequences

    $$0 \to H^{n-1}(\Gamma \backslash X) \to H^{n-1}(\Gamma \backslash G / M) \to \Z \to 0$$
    and
    $$0 \to \Z_{|\chi|} \to H^{n}(\Gamma \backslash G / M) \to H^1(\Gamma \backslash X) \to 0$$
    where $\Z_{|\chi|}$ is the cyclic group of order $|\chi|$. The sequences split since $\Z$ and $H^1(\Gamma \backslash X)$ are  free $\Z$-modules \footnote{We do not need any assumptions here, since if $Y$ is a topological space, then by the universal coefficient theorem 
$$H^1(Y, \Z) = Hom(H_1(Y, \Z), \Z)$$
which shows $H^1(Y, \Z)$ is always torsion free.}, hence

    $$H^n(\Gamma \backslash G / M) = H^1(\Gamma \backslash X) \oplus \Z_{|\chi|}$$
    $$H^{n-1}(\Gamma \backslash G / M)= H^{n-1}(\Gamma \backslash X) \oplus \Z.$$
    The case for $\chi = 0$ follows similarly.
\end{proof}

In case there is no torsion in $H^*(\Gamma \backslash G /M)$ and $\Gamma \backslash X$ is $Spin^c$, the K-theory of $C(G/P_0)\rtimes_r\Gamma$ could be calculated in the rank 1 case by employing Lemma \ref{lem-cohomology-unit-tangent-bundle} and Corollary \ref{cor-spinc-kk-equiv} together with the fact that the Chern map is injective when there is no torsion in cohomology, and thus implies an integral isomorphism. If there is torsion however, we will need to recall some properties of torsion in K-theory and how it relates to torsion in cohomology of the underlying space. 

\begin{lemma}
    \label{lem-torsion-in-k-theory}
    Let $Y$ be any compact CW-complex and let $H_t^{even}(Y)$, $H_t^{odd}(Y)$ denote the torsion part of the even and odd degree cohomology group respectively. Similarly we denote by $K^i_t(Y)$ the torsion part of the $i$'th K-group of $Y$. Then 
    
        $$|K^0_t(Y)| \leq |H_t^{even}(Y)| \quad \text{and} \quad  |K^1_t(Y)| \leq  |H_t^{odd}(Y)|.$$ 
    Furthermore the number of generators of $K_t^0(Y)$ and $K_t^1(Y)$ are less than or equal to the number of generators of $H_t^{even}(Y)$ and $H_t^{odd}(Y)$ respectively.
\end{lemma}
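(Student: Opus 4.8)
The plan is to use the Atiyah--Hirzebruch spectral sequence (AHSS) for topological K-theory, whose $E_2$-page is $E_2^{p,q} = H^p(Y; K^q(\mathrm{pt}))$, converging to $K^{p+q}(Y)$. Since $K^q(\mathrm{pt}) = \Z$ for $q$ even and $0$ for $q$ odd, the $E_2$-page in total degree $0$ (resp. $1$) is the direct sum of the even-degree (resp. odd-degree) integral cohomology groups of $Y$. The key observation is that each differential $d_r\colon E_r^{p,q}\to E_r^{p+r,q-r+1}$ is a group homomorphism, so passing from $E_r$ to $E_{r+1} = \ker d_r / \operatorname{im} d_r$ can only shrink the group: more precisely, for a finitely generated abelian group $A$ and a subquotient $B = \ker\phi/\operatorname{im}\psi$, the torsion subgroup $B_t$ has order dividing $|A_t|$ and is generated by at most as many elements as $A_t$. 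I would isolate this as a short elementary lemma about subquotients of finitely generated abelian groups (it follows, e.g., from the fact that a subgroup of a f.g. abelian group needs no more generators than the ambient group, and a quotient's torsion is a quotient of the torsion-plus-a-bounded-correction; cleanest is to argue via the structure theorem or via the surjection $A_t \twoheadrightarrow$ onto the torsion of any quotient after passing to a subgroup).

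First I would set up the AHSS and identify $\bigoplus_q E_2^{p,q}$ in the two total degrees with $H^{\mathrm{even}}(Y)$ and $H^{\mathrm{odd}}(Y)$. Then I would apply the subquotient lemma inductively across the pages: the torsion of $\bigoplus E_{r+1}^{p,q}$ in a fixed total degree has order and generator-count bounded by those of $\bigoplus E_r^{p,q}$ in that degree. Since $Y$ is (or may be taken to be, after restricting to the relevant finite skeleton — or one simply assumes $Y$ has finitely generated cohomology, which holds in our application) a finite CW-complex, the spectral sequence degenerates at a finite page $E_\infty$, and $E_\infty^{p,q}$ are the associated graded pieces of a finite filtration of $K^{p+q}(Y)$. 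The order of $K^i_t(Y)$ is the product of the orders of the torsion subgroups of the graded pieces, and the number of generators is at most the sum of the numbers of generators of the graded pieces; combining with the inductive bound gives $|K^0_t(Y)| \leq |H^{\mathrm{even}}_t(Y)|$ and $|K^1_t(Y)| \leq |H^{\mathrm{odd}}_t(Y)|$, and likewise for generators. One subtlety worth a sentence: a non-split extension $0\to B'\to B\to B''\to 0$ of f.g. abelian groups can have $|B_t|$ strictly smaller than $|B'_t|\cdot|B''_t|$ (e.g. $0\to\Z\to\Z\to\Z/n\to0$), but never larger, and the number of generators of $B$ is at most that of $B'$ plus that of $B''$ — both facts go the right way for us.

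The main obstacle is essentially bookkeeping rather than a deep point: one must be careful that the extension problem assembling $E_\infty$ into $K^i(Y)$ is handled correctly (inequalities, not equalities, and in the favorable direction), and that the passage to differentials genuinely cannot increase torsion order or generator number — this is where the elementary subquotient lemma does all the work and must be stated precisely. A secondary point is the finiteness hypothesis: the statement is given for "any CW-complex", so strictly one should either add finitely-generated-cohomology as a hypothesis or note that in the application $Y = \Gamma\backslash G/M$ has finitely generated cohomology (it is a closed or open manifold with finitely generated homology), since otherwise $|H^{\mathrm{even}}_t(Y)|$ need not even be finite; I would insert a remark to that effect rather than attempt the fully general statement.
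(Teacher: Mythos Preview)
Your overall strategy matches the paper's exactly: run the Atiyah--Hirzebruch spectral sequence, bound the torsion as you pass from page to page, and then bound the torsion through the filtration extensions assembling $E_\infty$ into $K^*(Y)$. Your treatment of the extension step and your finiteness caveat are both fine.

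The gap is in your ``subquotient lemma.'' As stated it is false: take $A=\Z$, $\phi=0$, and $\psi$ equal to multiplication by $n$; then the subquotient $B=\ker\phi/\operatorname{im}\psi=\Z/n\Z$ has $|B_t|=n$ while $|A_t|=1$, and $B_t$ needs one generator while $A_t$ needs none. In general, quotients of free abelian groups can acquire arbitrary torsion, so there is no universal bound on the torsion of a subquotient in terms of the torsion of the ambient group. Your parenthetical sketch (``surjection $A_t\twoheadrightarrow$ onto the torsion of any quotient'') fails for exactly this reason.

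What rescues the argument, and what the paper invokes explicitly, is the specific fact that the differentials $d_r$ in the Atiyah--Hirzebruch spectral sequence for complex $K$-theory have \emph{torsion image}: equivalently, they vanish after tensoring with $\Q$, since the Chern character gives a rational isomorphism $K^*(Y)\otimes\Q\cong H^{*}(Y;\Q)$ and hence the rationalized spectral sequence collapses at $E_2$. With this in hand, passing from $E_r^{p,q}$ to $E_{r+1}^{p,q}$ is a two-step process that genuinely cannot enlarge torsion: first restrict to $\ker d_r$, a subgroup (torsion of a subgroup sits inside the torsion of the ambient group), and then quotient by $\operatorname{im} d_r$, which is a \emph{torsion} subgroup (quotienting a finitely generated abelian group by a torsion subgroup leaves the free part intact and shrinks the torsion part). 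Insert this one fact about the differentials and your argument goes through.
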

\begin{proof}
    This is a simple consequence of the Atiyah-Hirzebruch spectral sequence. We only prove the first part of the lemma for $K^0(Y)$. The other claims follow by a similar argument.
    
    Recall that the $E_2$-page of the Atiyah-Hirzebruch spectral sequence consists of elements of the form

        $$E_2^{p,q}= H^p(Y, K^q(\star))$$
    with the image of the differentials being torsion subgroups \cite[2.6]{dold_68}. It follows that the size of the torsion subgroup can only be reduced as we increase the page number. Thus $E_\infty^{p,q}$ has a torsion subgroup of order less than or equal to the torsion subgroup of $E_2^{p,q}$ and so
    
        $$E_\infty^{even}: = \bigoplus_{p\in \N} E_\infty^{p, p}$$
    has a smaller torsion subgroup than
        $$\bigoplus_{p\in \N}E_2^{p,p} = \bigoplus_{p \in \N}H^{p}(Y, K^p(\star)) =  H^{even}(Y).$$
    Now let $$0 = F^{n}K^0(Y) \subset F^{n-1}K^0(Y) \subset \dots \subset F^{0}K^0(Y)= K^0(Y)$$ 
    be the filtration of $K^0(Y)$ corresponding to the spectral sequence. The group $K^0(Y)$ is then calculated by successively solving the short exact sequences

    $$0\to F^{s+1}K^0(Y) \to  F^{s}K^0(Y) \to E_\infty^{s, s} \to 0.$$
    It is easy to check that 
    $$|F^{s}K^0_t(Y)| \leq  |F^{s+1}K^0_t(Y)| + |E_{\infty, t}^{s, s}|$$ where $E_{\infty, t}^{s,s}$ denotes the torsion subgroup of $E_{\infty}^{s,s}$. It follows by induction that 

    $$|K_t^0(Y)| \leq \left| \bigoplus_p E_{\infty, t}^{p,p} \right| \leq |H_t^{even}(Y)|.$$
\end{proof}

We can now calculate the K-theory of several rank 1 boundary C*-algebras explicitly in terms of the cohomology of the associated  locally symmetric spaces:

\begin{proposition}
    \label{prop-k-theory-boundary-algebras}
    Assume $X$ has rank 1 and assume $H^*(\Gamma \backslash X)$ is torsion free and $\Gamma \backslash X$ is $Spin^c$. Let $n = \mathrm{dim}(X)$ and denote by  $\chi= \chi(\Gamma \backslash X)$ the Euler characteristic. Then
    \begin{equation}
        \label{eq-boundary-cohomology-iso-0}
        K^0(C(\partial X)\rtimes_r \Gamma) = \bigoplus_{p\in \Z} H^{2p + n}(\Gamma \backslash G/M, \Z)
    \end{equation}
    
    \begin{equation}
        \label{eq-boundary-cohomology-iso-1}
        K^1(C(\partial X)\rtimes_r \Gamma) = \bigoplus_{p\in \Z} H^{2p+n +1}(\Gamma \backslash G/M, \Z)
    \end{equation}
    which can both be determined from $H^*(\Gamma \backslash X)$ by Lemma \ref{lem-cohomology-unit-tangent-bundle}. The class of the unit $[1_{C(\partial X) \rtimes_r\Gamma}] \in K^0(C(\partial X)\rtimes_r \Gamma)$ is a generator. Furthermore the unit is a torsion element (of order $|\chi|$) if and only if  $\chi \neq 0$.
\end{proposition}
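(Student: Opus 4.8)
The plan is to push everything through Proposition~\ref{prop-main-prop} to a computation of the $K$-theory of the $(2n-1)$-dimensional manifold $\Gamma\backslash G/M$ --- the unit tangent bundle of $\Gamma\backslash X$ when the latter is compact, by Lemma~\ref{lem-sphere-bundle-iso} --- and then to extract both the $K$-groups and the position of the unit from the Atiyah--Hirzebruch spectral sequence, using Lemma~\ref{lem-cohomology-unit-tangent-bundle}. Since $X$ has rank $1$ we have $\partial X\simeq G/P$, so Proposition~\ref{prop-main-prop} gives $C(\partial X)\rtimes_r\Gamma\simeq_{KK_{+n}}C_0(\Gamma\backslash G/M)$, hence $K^i\big(C(\partial X)\rtimes_r\Gamma\big)\cong K^{i+n}\big(C_0(\Gamma\backslash G/M)\big)$; when $\Gamma\backslash X$ is compact this is the ordinary topological $K$-theory $K^{i+n}(\Gamma\backslash G/M)$, and otherwise the compactly supported variant (note the crossed product is unital, as $\partial X$ is compact). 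So it suffices to compute $K^*(\Gamma\backslash G/M)$.

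First, the Atiyah--Hirzebruch spectral sequence of $\Gamma\backslash G/M$ collapses at $E_2$. Its differentials $d_r$ ($r\ge 3$ odd) have image in the torsion subgroups, as in the proof of Lemma~\ref{lem-torsion-in-k-theory}; by Lemma~\ref{lem-cohomology-unit-tangent-bundle}, $H^*(\Gamma\backslash G/M)$ is torsion free except in degree $n$, where the torsion subgroup is $\Z_{|\chi|}$ ($\Gamma\backslash X$ compact with $\chi\ne 0,1$) or trivial. Hence every $d_r$ with target outside degree $n$ vanishes. A $d_r$ into degree $n$ would originate in a degree $<n-1$; there $\pi^*\colon H^*(\Gamma\backslash X)\to H^*(\Gamma\backslash G/M)$ is an isomorphism (Gysin sequence) and, by naturality, $d_r$ restricted to the image of $\pi^*$ is $\pi^*$ of the corresponding differential of $\Gamma\backslash X$, which vanishes because $H^*(\Gamma\backslash X)$ is torsion free. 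So the spectral sequence degenerates; the Chern character identifies the free rank of $K^i(\Gamma\backslash G/M)$ with that of $\bigoplus_p H^{2p+i}(\Gamma\backslash G/M)$, and Lemma~\ref{lem-torsion-in-k-theory} bounds its torsion by $|\chi|$.

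To pin the torsion down exactly --- equivalently to see that the filtration of $K^*(\Gamma\backslash G/M)$ splits --- and to do so without any $\mathrm{Spin}^\C$-assumption, I would use the long exact sequence of the pair $(D,\partial D)$ with $D$ the unit disk bundle of $T(\Gamma\backslash X)$ and $\partial D=\Gamma\backslash G/M$:
$$\cdots\to\tilde K^{i}\!\big(\mathrm{Th}(T\Gamma\backslash X)\big)\xrightarrow{\,f\,}K^{i}(\Gamma\backslash X)\xrightarrow{\,\pi^*\,}K^{i}(\Gamma\backslash G/M)\to\tilde K^{i+1}\!\big(\mathrm{Th}(T\Gamma\backslash X)\big)\to\cdots .$$
Since $\Gamma\backslash X$ is orientable we have the ordinary Thom isomorphism $\tilde H^*(\mathrm{Th}(T\Gamma\backslash X))\cong H^{*-n}(\Gamma\backslash X)$, which is torsion free; so $\tilde K^*(\mathrm{Th})$ is free and concentrated in filtration degrees $\ge n$, while $K^*(\Gamma\backslash X)$ is free and concentrated in filtration degrees $\le n$ because $\dim\Gamma\backslash X=n$. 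On the (collapsed) $E_\infty$-pages $f$ induces cup product with the ordinary Euler class $e\in H^n(\Gamma\backslash X)$; combined with the two filtration constraints this forces $f$, in the parity $i\equiv n\pmod 2$, to factor as $\Z\xrightarrow{\times\chi}H^n(\Gamma\backslash X)\hookrightarrow K^i(\Gamma\backslash X)$ onto a direct summand (and $f=0$ in the other parity). Hence $\operatorname{coker}f$ is free of the expected rank plus exactly $\Z_{|\chi|}$, and since the kernel of the next copy of $f$ is a subgroup of a free group, the sequence $0\to\operatorname{coker}f\to K^i(\Gamma\backslash G/M)\to\ker(\text{next }f)\to 0$ splits. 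Comparing with Lemma~\ref{lem-cohomology-unit-tangent-bundle} yields \eqref{eq-boundary-cohomology-iso-0}--\eqref{eq-boundary-cohomology-iso-1}; in the noncompact case $e=0$ (as $H^n(\Gamma\backslash X)=0$), so $f=0$ and no torsion appears.

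It remains to locate the unit. Writing $[1_{C(\partial X)\rtimes_r\Gamma}]=\iota_*[1_{C(\partial X)}]$ for the unital inclusion $\iota$, and transporting through the Rieffel--Morita equivalence $C(G/P)\rtimes_r\Gamma\sim_{Mor}C_0(\Gamma\backslash G)\rtimes_r P$, the Connes--Thom isomorphism for $V$, and the remaining Morita step underlying Proposition~\ref{prop-main-prop}, one identifies $[1]$ with $\pi^*\beta\in K^{i}(\Gamma\backslash G/M)$ ($i\equiv n$), where $\beta$ generates the top summand $H^n(\Gamma\backslash X)\subset K^i(\Gamma\backslash X)$ from the previous paragraph --- that is, with the pullback of the $K$-theoretic Euler class of $T(\Gamma\backslash X)$. (Consistently, the six-term sequence of $0\to C_0(X)\rtimes_r\Gamma\to C(\overline X)\rtimes_r\Gamma\to C(\partial X)\rtimes_r\Gamma\to 0$ shows $\partial[1]=0$, because the quotient map is unital, so $[1]\in\operatorname{im}\pi^*$, fixing its filtration level.) When $\Gamma\backslash X$ is compact and $\chi=1$, $f$ is an isomorphism onto $H^n(\Gamma\backslash X)$, so $\pi^*\beta=0$ and $[1]=0$; when $\chi\ne 1$, $\pi^*\beta$ generates that summand, which is torsion of order $|\chi|$ precisely when $\Gamma\backslash X$ is compact and $\chi\ne 0$, and a free generator when $\chi=0$ or $\Gamma\backslash X$ is noncompact. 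The main obstacle is this last step: making the transport of $[1]$ through the Rieffel and Connes--Thom isomorphisms fully rigorous --- identifying it with the Euler class on the nose rather than up to lower-filtration corrections --- is where the real work lies, everything else being bookkeeping with the two spectral sequences and the disk-bundle long exact sequence.
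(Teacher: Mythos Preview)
Your computation of the $K$-groups is correct and in fact more self-contained than the paper's. The paper only bounds the torsion of $K^0(\Gamma\backslash G/M)$ from above via Lemma~\ref{lem-torsion-in-k-theory} (at most one cyclic summand, of order dividing $|\chi|$), and then imports the lower bound from Emerson--Meyer \cite[Corollary~2]{emerson_euler_2006}: the class $[1]$ has order $|\chi|$, forcing the torsion to be exactly $\Z_{|\chi|}$. Your disk-bundle argument gets this directly: the filtration constraint that $f$ factors through $F^n\tilde K^*(\mathrm{Th})/F^{n+1}\to F^nK^*(\Gamma\backslash X)$, together with degeneration of both spectral sequences, pins $f$ down as $\times\chi$ on a $\Z$-summand without any $K$-orientation on $T(\Gamma\backslash X)$. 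This is a nice alternative and buys you independence from the Emerson--Meyer input for the group computation.

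The gap is exactly where you say it is, and the paper does \emph{not} attempt what you propose. It never tracks $[1]$ through the Morita and Connes--Thom equivalences; it simply cites Emerson--Meyer. For the compact case with $\chi\ne 0,1$, \cite[Corollary~2]{emerson_euler_2006} (valid because rank-$1$ lattices satisfy Baum--Connes with coefficients) gives that $[1]$ has order $|\chi|$, hence generates the torsion. For $\chi=0$ or $\Gamma\backslash X$ noncompact, the paper uses the short exact sequence of \cite[Corollary~32]{emerson_euler_2006},
\[
0\longrightarrow K_0(C^*_r(\Gamma))\xrightarrow{\ u_*\ }K_0(C(\partial X)\rtimes_r\Gamma)\longrightarrow K_1(C_0(\Gamma\backslash X))\longrightarrow 0,
\]
in which all three groups are free; thus $u_*$ takes generators to generators, and $[1_{C^*_r(\Gamma)}]$ is a generator because the canonical trace on $C^*_r(\Gamma)$ splits the unital inclusion $\C\hookrightarrow C^*_r(\Gamma)$. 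For $\chi=1$, the same reference gives the four-term version with $\ker u_*=\langle[1_{C^*_r(\Gamma)}]\rangle$, whence $[1_{C(\partial X)\rtimes_r\Gamma}]=u_*[1_{C^*_r(\Gamma)}]=0$. Your proposed identification of $[1]$ with the pullback of the $K$-theoretic Euler class is essentially the content of the Emerson--Meyer Gysin machinery; rather than rederiving it, you can cite them and be done.
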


\begin{proof}
    Assume $\chi = 0$. This is precisely the case where $H^*(\Gamma \backslash G/M)$ is torsion free (by Lemma \ref{lem-cohomology-unit-tangent-bundle})\footnote{note that $|\chi| \neq 1$. See the discussion after the proof of Proposition \ref{prop-k-theory-boundary-algebras}}. By Lemma \ref{lem-torsion-in-k-theory} and the fact that the Chern map is a rational isomorphism, we get\footnote{The isomorphism $K_0(C_0(\Gamma \backslash G/M))\otimes \Q = H^{even}(\Gamma \backslash G /M) \otimes \Q$ implies $K_0(C_0(\Gamma \backslash G/M))$ and $H^{even}(\Gamma \backslash G /M)$ have the same (finite) cardinality of torsion free generators . Assuming $H^{even}(\Gamma \backslash G /M)$ is torsion free, then  $K_0(C_0(\Gamma \backslash G/M))$ must be torsion free as well (by Lemma \ref{lem-torsion-in-k-theory}). They are thus two finitely generated free $\Z$-modules and isomorphic (as abstract modules) since the cardinality of their generators agree.}

    $$K_0(C_0(\Gamma \backslash G/M)) = H^{even}(\Gamma \backslash G /M)$$
    $$K_1(C_0(\Gamma \backslash G/M)) = H^{odd}(\Gamma \backslash G /M).$$
    The expression for the K-groups of $C(\partial X)\rtimes_r\Gamma$ then follow from Corollary \ref{cor-spinc-kk-equiv}.    

    Assume now  $\chi \neq 0,1$. This only happens when $n$ is even as $\chi= 0$ for all odd $n$ by Poincar\'{e} duality. By Corollary \ref{cor-spinc-kk-equiv}
    $$K_*(C(\partial X) \rtimes_r\Gamma)= K^*(\Gamma \backslash G /M).$$
    Now by Lemma \ref{lem-torsion-in-k-theory}, since the order of the torsion subgroup of $K^0(\Gamma \backslash G /M)$ and the number of its generators is less than or equal to those of $H^{even}(\Gamma \backslash G/M)$ we see that 
    $K^0(\Gamma \backslash G /M) = \Z^r \oplus\Z_{s}$
    where $s\leq |\chi|$ and $r = \mathrm{rank}(H^{even}(\Gamma \backslash G/M))$. Since $\Gamma$ is a lattice in a rank 1 Lie group it satisfies the Baum-Connes conjecture with coefficients \cite[Theorem 6.15]{aparicio_baum-connes_2019} so by  \cite[Corollary 2]{emerson_euler_2006}, we know that the class of the unit $[1_{C(\partial X)\rtimes_r\Gamma}] \in K^0(C(\partial X)\rtimes_r \Gamma)$ has order $|\chi|$. This forces us to set $s = |\chi|$, and thus $[1_{C(\partial X)\rtimes_r\Gamma}]$ generates the torsion part of $K^0(C(\partial X)\rtimes_r \Gamma)$.

    To see that $[1_{C(\partial X)\rtimes_r\Gamma}]$ is a generator also when $\chi = 0$, we employ \cite[Corollary 32]{emerson_euler_2006}, where the authors produces an exact sequence
    \begin{equation}
        \label{eq-meyer-emerson}
        0\to K_0(C^*_r(\Gamma)) \xrightarrow[]{u_*}K_0(C(\partial X)\rtimes_r\Gamma) \to K_1(C_0(\Gamma \backslash X)) \to 0.
    \end{equation}
    Our assumptions imply that all the groups in this extension are torsion free, thus $u_*$ maps generators to generators. It follows that the unit in $K_0(C(\partial X)\rtimes_r\Gamma)$ is a generator if and only if $[1_{C^*_r(\Gamma)}] \in K_0(C^*_r(\Gamma))$ is a generator. But this is always true for any discrete group since the canonical positive faithful trace 
    $$\tau: C_r^*(\Gamma) \to \C$$
    determines a left inverse of the unital inclusion $\C\to C_r^*(\Gamma)$.
\end{proof}

It is known (though we have not found a reference) that $|\chi(\Gamma \backslash X)| \neq 1$ for oriented closed locally symmetric spaces $\Gamma \backslash X$. If such a space existed we would get by \cite[Corollary 32]{emerson_euler_2006} a short exact sequence

    $$ 0\to \langle [1_{C^*_r(\Gamma)}] \rangle \to K_0(C^*_r(\Gamma)) \xrightarrow[]{u_*}K_0(C(\partial X)\rtimes_r\Gamma) \to K_1(C_0(\Gamma \backslash X)) \to 0$$
where the inclusion on the left is the inclusion of the subgroup generated by $ [1_{C^*_r(\Gamma)}]$ in $K_0(C^*_r(\Gamma))$ and 

    $$u: C_r^*(\Gamma) \to C(\partial X) \rtimes_r\Gamma$$
is the (unital) inclusion map. Thus $[1_{C(\partial X)\rtimes_r\Gamma}] = u_*([1_{C^*_r(\Gamma)}])$, but this is zero by exactness of the sequence. Since our boundary C*-algebra is KK-equivalent to a (non-KK-contractible) commutative C*-algebra we would get a contradiction.

\begin{proposition}
    \label{prop-unit-and-volume}
    Let $\Gamma, \Gamma' \subset X$ be two cocompact lattices of a rank 1 symmetric space whose associated locally symmetric space is $Spin^c$. Assume $\chi^{(')} :=  \chi(X/\Gamma^{(')})  \neq 1$. Then
    $$C(\partial X)\rtimes_r\Gamma = C(\partial X)\rtimes_r\Gamma'$$
    implies 
        $$\chi = \chi' \qquad \text{and } \qquad Vol(X/\Gamma) = Vol(X/\Gamma').$$
\end{proposition}

\begin{proof}
    Since $|\chi|$ is the order of the unit in $K(C(\partial X)\rtimes_r \Gamma)$, the fact that $\chi = \chi'$ follows from the classification theorem of Kirchberg algebras.

    The final claim follows from the work of Eberlein and Chen in \cite[Cor.1 (4)]{chen_isometry_1982}.
\end{proof}

Note that if $\Gamma$ is as in Proposition \ref{prop-unit-and-volume} and $G$ has no factors isomorphic to $PSL_2(\R)$ or $PSL_2(\C)$, then by a result of Wang \cite{wang} there are only finitely many lattices $\Gamma'$ up to conjugacy for which 
$$Vol(\Gamma \backslash X) = Vol(\Gamma'\backslash X).$$
Thus, there can only be finitely many isomorphic maximal boundary C*-algebras arising in this way.

We end with a family of examples of isomorphic boundary C*-algebras of distinct locally symmetric spaces. 

\begin{example}[homology spheres]
    An integral homology $n$-sphere is a closed smooth manifold $M$ which has the integral (co)homology of and $n$-sphere. Thus 
    $$\chi(M)= \begin{cases} 2 & \text{ if n is even} \\ 0 & \text{if n is odd.} \end{cases}$$
    There are infinitely many hyperbolic homology  $3$-spheres \cite{hom_note_2018}, given by quotients of torsion free lattices in $\H_\R^3$. Since $\chi = 0$ for all homology $3$-spheres, by Proposition \ref{prop-k-theory-boundary-algebras} these boundary C*-algebras are all isomorphic. Similar examples can be constructed using homology $2$-spheres and homology $4$-spheres (both of which are $Spin^c$ since the third cohomology group vanishes) showing that the dimension of $X$ is not an invariant of the maximal boundary C*-algebras.
\end{example}

\bibliographystyle{alpha.bst}

\end{document}